\newtheorem{thr}{Theorem}[section]
\newtheorem{lem}{Lemma}[section]
\theoremstyle{definition}
\newtheorem{ex}{Example}[section]
\newtheorem{rem}{Remark}[section]
\def\Re{\mathop{\rm Re}\nolimits}
\def\Im{\mathop{\rm Im}\nolimits}
\begin{document}

\title[]{ONE-PARAMETER FAMILIES OF CONFORMAL MAPPINGS OF THE HALF-PLANE ONTO POLYGONAL DOMAINS WITH SEVERAL SLITS}

\author[A.~Posadskii]{A.~Posadskii}
\address{Lebedev Physical Institute,
Moscow, 119991; Saint Petersburg University, 7/9 Universitetskaya nab., St. Petersburg, 199034 Russia}
         \email[]{posadskij.af@phystech.edu}
\author[S.~Nasyrov]{S.~Nasyrov}
\address{Kazan Federal University, Kazan, 420008; Saint Petersburg University, 7/9 Universitetskaya nab., St. Petersburg, 199034 Russia}
 \email[]{semen.nasyrov@yandex.ru}

\maketitle

\begin{abstract}
Among various methods of finding accessory parameters in the Schwarz-Christoffel integrals, Kufarev's method, based on the Loewner differential equation, plays an important role. It is used for describing one-parameter families of functions that conformally map a canonical domain onto a polygon with a slit the endpoint of which moves along a polygonal line starting from a boundary point. We present a modification of Kufarev's method for the case of several slits, the lengths of which have depend of each other in a certain way. We justify the method and find a system of ODEs describing the dynamics of accessory parameters. We also present the results of numerical calculations which confirm the efficiency of our method.\\

\noindent {Keywords:} {Schwarz-Christoffel integral, accessory parameters, Loewner equation, parametric method, Kufarev method, Cauchy problem, ODE system}.

\noindent {Mathematics Subject Classification:} 30C30, 30-08.
\end{abstract}

\section*{Introduction}\label{intr}

As it is known, the integral formula giving a conformal mapping of a canonical domain onto a given polygonal domain was independently derived by E.~Christoffel \cite{Christoffel} and H.~Schwarz \cite{Schwarz}.  The Schwarz-Christoffel integral contains unknown (accessory) parameters, and in general case the problem to find them is very difficult (see, e.g. \cite{Driscoll_Tr}).

Many approaches have developed to solve this problem, including analytical methods suggested by M.~A.~Lavrentiev \cite{Lavrentiev1,Lavrentiev2},  A.~Weinstein \cite{Weinstein1,Weinstein2}, S.~Bergman \cite{Bergmann1,Bergmann2} and others. With the development of computing power, numerical methods are being actively developed; some of them are presented in the works of V.~V.~Sobolev \cite{Sobolev1,Sobolev2}, L.~N.~Trefesen \cite{Trefethen}, R.~T.~Davis \cite{Davis} and others. Later these methods were  implemented in software packages (see, e.g., the Driscoll MATLAB toolbox~\cite{Driscoll}). We also note  a new interesting method, suggested by S.~I.~Bezrodnykh \cite{bezr}; it is based on analytic continuation of the Lauricella function.


The famous Loewner parametric method \cite{Goluzin,Alexandrov} has influenced the process of solving the accessory parameters problem. The main idea of the method is to consider families of univalent functions depending on a real parameter and mapping some canonical domain (such as the unit disk or the upper half-plane) onto a family of domains with a growing slit. If the trajectory of the slit is a polygonal line and the endpoint of the slit moves along one of its line segment, then the family satisfies some differential equation. Using this approach, P.~P.~Kufarev (see, e.g. \cite{Kufarev}, \cite[Ch.VI]{Alexandrov}) reduced the problem of finding the accessory parameters  to integrating several Cauchy problems for systems of ordinary differential equations. In this method, the process of finding the accessory parameters consists of several steps. At every step, one of such systems is solved and its initial conditions are determined from the results obtained on the previous step. The values obtained at the last step, are desirable ones. Some numerical results based on the Kufarev's method can be found in \cite{Chistyakov} and \cite{Hopkins}.

Later, V.~Ya.~Gutlyansky and A.~O.~Zaidan \cite{Gutlyansky} proposed a modification of Kufarev's method. They consider conformal mappings in the upper half-plane $\mathbb{H}^+$ and cut the needed polygon $P$ out of another polygon $P'$, not a canonical domain. Geometrically, this means that at first a rectilinear slit grows from a boundary point of a polygon $P'$, then, at the second step, the slit changes its direction and grows from the endpoint of the slit obtained at the first step and so on.  At the last step,  the endpoint of the slit reaches  the boundary of $P'$ (see Fig.~\ref{fig:1 1.P-P}.). The whole trajectory $L$ of the endpoint of the slit subdivides $P'$ into two parts, every of which is a polygonal domain; they are kernels of the family of domains with growing slit. If one of the kernels is the needed polygonal domain $P$ and the normalization of functions, mapping conformally $\mathbb{H}^+$ onto domains with slits along subarcs of $L$, is fixed appropriately, then, according to the Caratheodory kernel convergence theorem (see, e.g., \cite[ch.~II]{Goluzin}, the function corresponding to the limiting (final) values of the accessory parameters maps the upper half-plane onto $P$. Thus, we cut out, like scissors, the polygonal domain  $P$ with vertices $A_1$, $A_2,\ldots,A_{n}$ from the original polygon $P'$ with vertices $A'_1$, $A'_2,\ldots,A'_{n'}$.  On Fig.~1 we illustrate the situation in a case where; here $P'$ and $P$ have common vertices $A'_{j}=A_j$, $j\in \left\{1,\ldots,l,k+3,\ldots,n\right\}$).

\begin{figure}[h]
\vspace{-2ex}
	\includegraphics[scale=0.056]{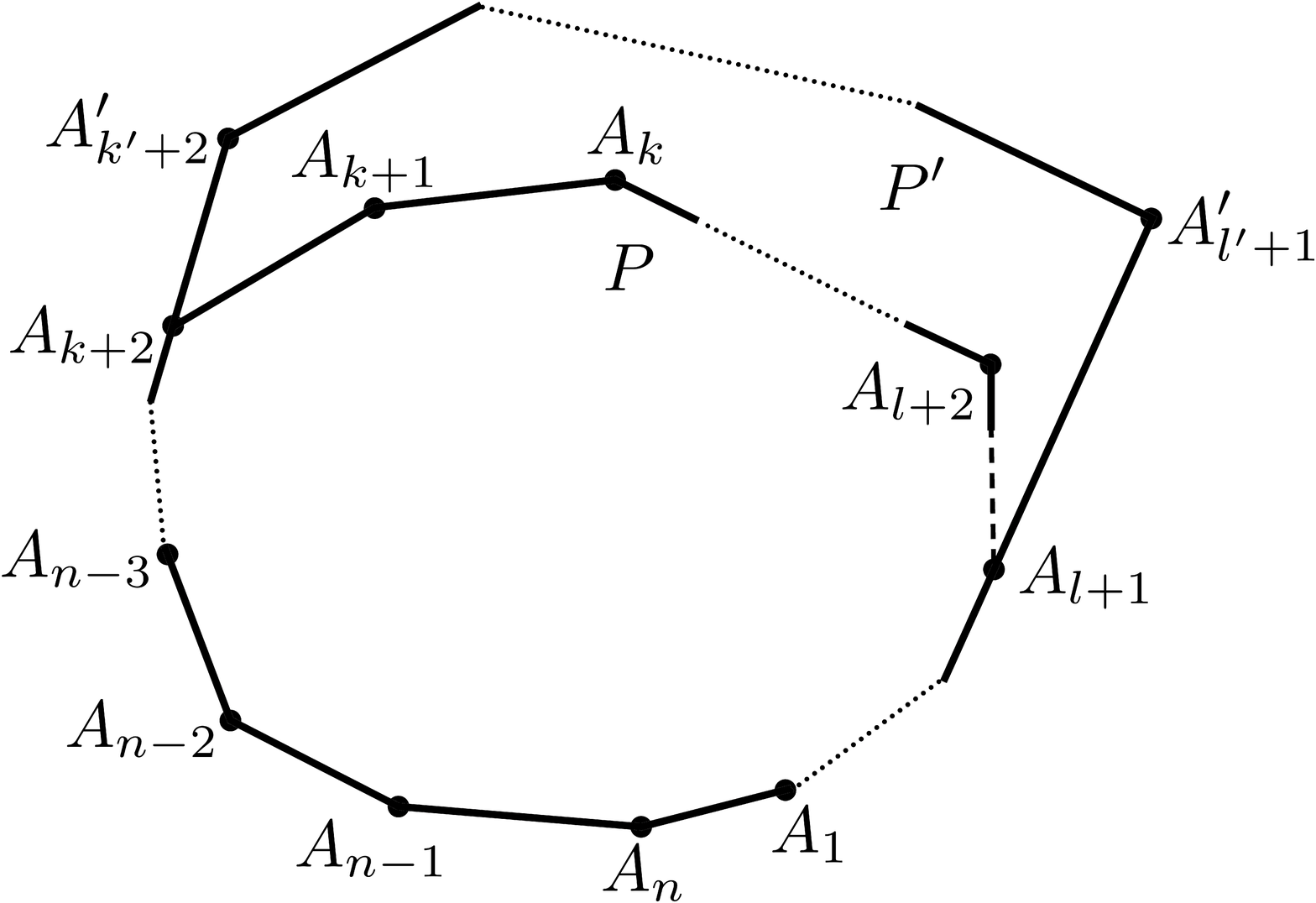}
        \centering
	\caption{Polygon $P'$ with a growing  slit along a part of the boundary of  the polygon $P$.} 
	\label{fig:1 1.P-P}
\end{figure}

Kufarev's method showed its efficiency and there arises an interest in its generalization. Thus, using the parametric method, it is possible to find approximate conformal mappings of canonical domains onto polycircular arc domains (see, for example, \cite{Alexandrov}). Recently,  I.~A.~Kolesnikov  \cite{Kolesnikov1, Kolesnikov2} proposed an improvement of this approach. Also he used the  parametric method to find accessory parameters for conformal mappings of polygonal domains with a countable set of vertices and the translation symmetry \cite{Kolesnikov3}. Moreover I.~A.~Kolesnikov \cite{Kolesnikov4} developed a more general method that allows to carry out parallel translation of the sides of a polygon. L.~Yu.~Nizamieva \cite{Nizamieva1} suggested an approximate method for finding accessory parameters in the Schwarz-Christoffel integrals,  based on Kufarev's method and the apparatus of Hilbert's boundary value problems. In addition, she develop this approach for solving some boundary value problems with a free boundary \cite{Nizamieva2}. A generalization for polygonal domains on Riemann surfaces  was given by N.~N.~Nakipov and S.~R.~Nasyrov \cite{Nakipov}.

In this paper, we suggest a modification 
of the results obtained by V.~Ya.~Gutlyansky and A.~O.~Zaidan \cite{Gutlyansky} for the case where several slits in an initial polygon grow simultaneously. The advantage of the proposed approach is that it becomes possible to reduce significantly the number of successive steps, in comparison to \cite{Gutlyansky}. When solving the problem, we can perform calculations in fewer steps. For example, for a convex polygon we can make it in one step.

Now we will describe one of the possible variants. Let we have a polygonal domain $P'$ with vertices $A'_1$, $A'_2,\ldots,A'_{n'}$ and we want to cut out of $P'$ a convex polygonal domain  $P$ with vertices $A_1$, $A_2,\ldots,A_n$ from $P'$. Assume that we obtain $P$ from $P'$ drawing the slit along the polygonal line $A_{l+1}A_{l+2}\ldots A_{k+2}$ (Fig.~\ref{OdinEtap}). Then, for every $j$, $l+1\le j\le k+1$, we draw the ray $L_j$ originated at $A_j$ and passing through the next vertex $A_{j+1}$. Let the ray $L_j$ intersect the boundary of the polygon $P'$ at some point $B_j$ ($B_{k+1}=A_{k+2}$). Then we simultaneously release slits from the points $B_j$, $l+1\le j\le k+1$, which endpoints move along the rays $L_j$ to the points $A_{j}$. The growth velocities of slits can be chosen so that the endpoints of the $j$th slits reach simultaneously the points $A_{j}$, $l+1\le j\le k+1$. As a result, we see that, unlike the classic Kufarev's method, we reduced  the number of steps to one. The only thing is that for carrying out such an algorithm it is necessary to solve a more complicated system of ordinary differential equations. However, the system does not essentially differ from the systems of ODEs used in the case of one slit. Thus, we see that the developed method can be used for a more simple and fast search for the accessory parameters.

\begin{figure}[h]
        \vspace{1.5ex}
	\includegraphics[scale=0.06]{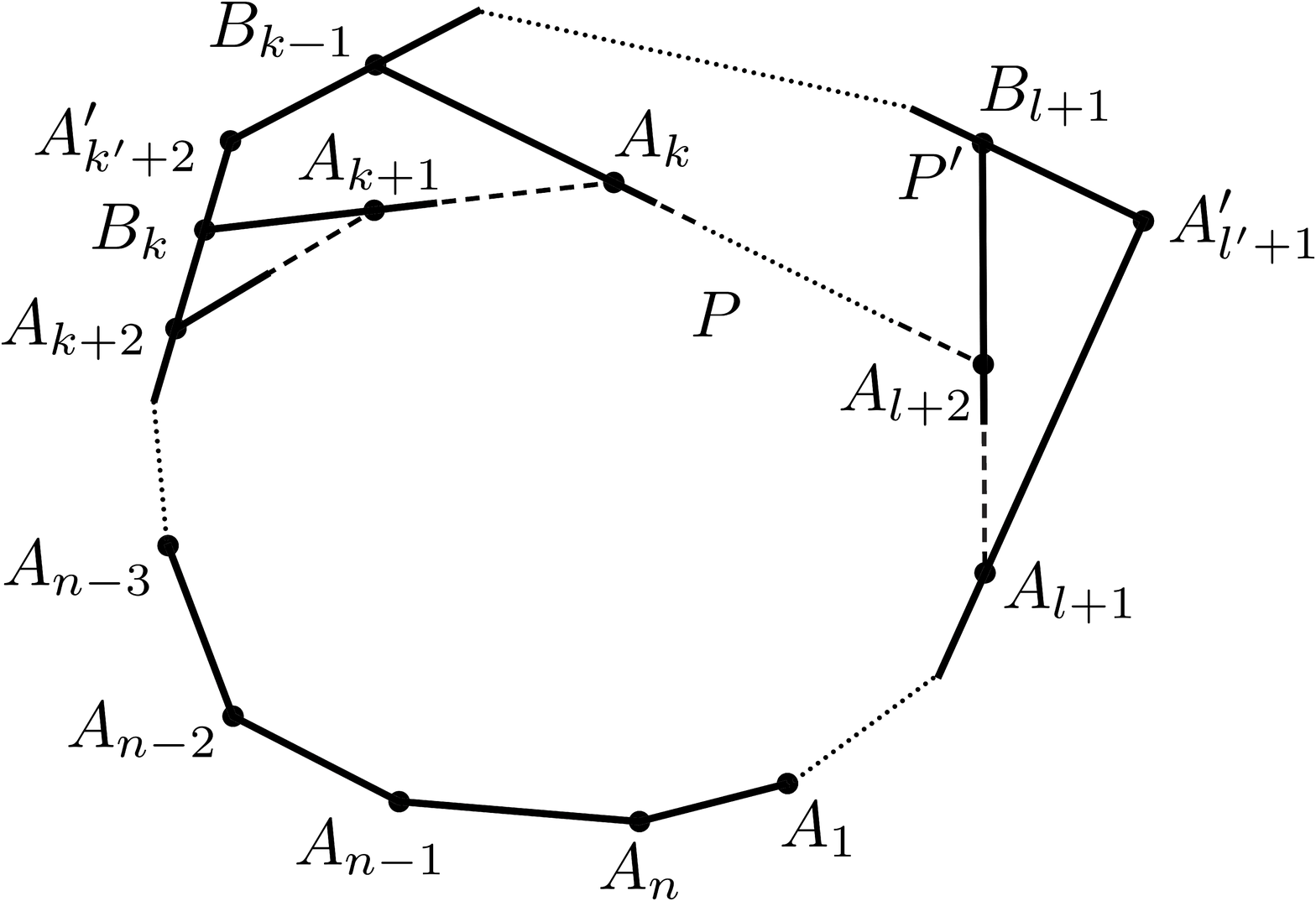}
        \centering
	\caption{Cutting a convex polygonal domain $P$ out of $P'$ in one stage. On the figure, $A'_{i'}=A_i$ for $i'=i\in \left\{1,\ldots,l,k+3,\ldots,n\right\}$.}
	\label{OdinEtap}
        \vspace{-1.5ex}
\end{figure}

To justify the method, it is important to choose an appropriate parameter with respect to which the family of mappings is differentiable. We consider the Schwarz-Christoffel integral of a sufficiently arbitrary form giving a conformal mapping onto a polygonal domain with fixed angles and investigate the dependence between the lengths of the sides of polygon and the accessory parameters. We show that the accessory parameters depends smoothly on  the lengths of the sides (Actually, this fact was established by A.~Weinstein \cite{Weinstein1}, \cite{Weinstein2}). In the case of a fixed polygonal domain with growing slits, such that the lengths are connected with each other smoothly, this fact allows us to take, as a suitable parameter, the length of one of these slits.

\section{One-parameter families of conformal mappings onto polygons with several slits}\label{sec1}

Let $D_{n}$ be a simply connected domain in $\mathbb{C}$, whose boundary is a (not necessarily bounded) $n$-gon, $n\geq 3$, with vertices $A_{1},\ldots,A_{n}$ and interior  angles $\pi\alpha_{k}$,  $\left|\alpha_{k}\right|\leq2$, $1\le k\le n$ (the value of $\alpha_k$ is negative if $A_k=\infty$). Then the conformal mapping of the upper half-plane $\mathbb{H}^{+}$ onto $D_{n}$ exists and can be represented by the Schwarz-Christoffel formula:
\begin{equation}
    f\left(z\right)=c\int_{0}^{z}\prod_{k=1}^{n}\left(\zeta-a_{k}\right)^{\alpha_{k}-1}d\zeta+c_1,
    \label{2.1}
\end{equation}
where $a_{1},\ldots,a_{n}$ are the preimages of the vertices $A_{1},\ldots,A_{n}$, $c\neq 0$ and $c_{1}$  are some complex numbers. The numbers $a_k$, $1\le k\le n$, $c$ and $c_{1}$ are not known beforehand, they are called {\it accessory parameters}. Moreover, due to the properties of conformal mappings, we can fix any three of the parameters $a_k$. Then the remaining accessory parameters  and, therefore,  the conformal mapping are defined in a unique way. Further we will assume that $a_{n-2}=0$, $a_{n-1}=1$, $a_{n}=\infty$, and  the vertices $A_{n-2}$, $A_{n-1}$, $A_{n}$ are at finite points of the plane; this does not essentially restrict the generality. In this case, $-\infty<a_1<a_2<\ldots<a_{n-3}<0$ and (\ref{2.1}) takes the form
\begin{equation*}
    f\left(z\right)=c\int_{0}^{z}\prod_{k=1}^{n-1}\left(\zeta-a_{k}\right)^{\alpha_{k}-1}d\zeta+A_{n-2}.
    \label{2.2}
\end{equation*}

Let us fix $m$ different points $B_{1}$, $B_{2},\ldots,B_{m}$ on $\partial D_{n}$, not lying on the segments $A_{n-2}A_{n-1}$ and $A_{n-1}A_{n}$, and release from them inside  $D_{n}$ disjoint  rectilinear slits of growing lengths depending on a real parameter~$t\in[0,T]$. Denote by $\Lambda_{1}(t),\ldots,\Lambda_{m}(t)$ the endpoints of the slits. The domain $D_{n}$ with slits, corresponding to the value of the parameter $t$, will be denoted by $D_{n}(t)$. We also assume that at $t=0$ the lengths of all slits are equal to zero, and for $t\ge0$ they are strictly increasing. Since the domain $D_{n}(t)$ is also polygonal, the conformal mapping $f\left(z,t\right):\mathbb{H}^{+}\rightarrow D_{n}(t)$, satisfying the normalization conditions $f\left(0,t\right)=A_{n-2},f\left(1,t\right)=A_{n-1},f\left(\infty,t\right)=A_{n}$, can be also  represented as a Schwarz-Christoffel integral.

For every $i$, $1\le i\le m$, the length of the $i$-th slit grows, and at the initial moment $t=0$ the point $B_i$ bifurcates. Therefore, for $t>0$ there are two boundary elements (prime ends in the sense of Caratheodory) in $D_{n}(t)$  supported at the point $B_i$; they lie on the different sides of the slit. Denote by $a_{i1}(t)$ and $a_{i2}(t)$ their preimages under the conformal mapping of the upper half-plane onto $D_{n}(t)$, and let $\pi\alpha_{i1}$ and $\pi\alpha_{i2}$ be the angles of $D_{n}(t)$ at these points. We numerate $a_{ij}$ so that $a_{i1}(t)<a_{i2}(t)$, $1\le i \le m$. {Denote by $\lambda_{i}(t)$ the preimage of the endpoint of the $i$-th slit; it is located on the segment 
$\left[a_{i1}(t),a_{i2}(t)\right]$.} Let also $\sigma_{k}=\alpha_{k}-1$, $\sigma_{ij}=\alpha_{ij}-1$.

Now we will write the Schwarz-Christoffel integral mapping the upper half-plane onto $D_{n}(t)$. Here we distinguish two cases.

1) If none of the slits leaves a vertex of the polygon $\partial D_{n}$, then for all $i$ we have $\alpha_{i1}+\alpha_{i2}=1$, i.e. $\sigma_{i1}+\sigma_{i2}=-1$, and
\begin{equation}
    f\left(z,t\right)=c(t)\int_{0}^{z}\prod_{l=1}^{m}\left(\zeta-\lambda_{l}(t)\right)\prod_{i=1}^{m}\prod_{j=1}^{2}\left(\zeta-a_{ij}(t)\right)^{\sigma_{ij}}
    \prod_{k=1}^{n-1}\left(\zeta-a_{k}(t)\right)^{\sigma_{k}}d\zeta+A_{n-2}.
    \label{2.3}
\end{equation}

2) If one of the initial points of the slits coincides with a vertex of the initial polygon, for example, $B_{l}=A_{k}$, then we omit the factor in the formula $\left(\ref{2.3}\right)$ with  $k=l$; we do the same for a larger number of slits growing from vertices.

In what follows, for greater clarity, we will consider only case 1), unless otherwise stated, although considering the case 2) is not  difficult.

\section{Derivation of the Loewner equation}\label{sec2}

Information about the dynamics of the mapping \eqref{2.3} with a change in the parameter $t$ is given by the Loewner equation. Moreover, the equation allows us to find a system of ordinary differential equations to determine the accessory parameters. For the case of mapping of the unit disk, the corresponding equations can be found, for example, in the monograph~\cite{Alexandrov}.

Here we consider the case of several slits. When deriving the Loewner equation, we will follow the scheme, proposed in \cite{Gutlyansky} where the case of a single slit was considered.

We start by proving a consequence of the Schwarz lemma.

\begin{lem}
Let $w(z)$ be a conformal mapping of $\mathbb{H}^{+}$ onto $\mathbb{H}^{+}$ with $m$ slits along disjoint analytic arcs $\Gamma_{1}$, $\Gamma_{2},\ldots,\Gamma_{m}$,  growing from the points $\lambda_{1},\lambda_{2},\ldots,\lambda_{m}$ on $\partial H$ such that $-\infty<\lambda_{1}<\lambda_{2}<\ldots<\lambda_{m}<0$ and $w(0)=0$, $w(1)=1$, $w(\infty)=\infty$. Then $w(z)$ is holomorphic at $z=1$ and $\left.\frac{dw}{dz}\right|_{z=1}\leq1$. If $\left.\frac{dw}{dz}\right|_{z=1}=1$, then $w\left(z,t\right)\equiv z$.
\label{lemm1}
\end{lem}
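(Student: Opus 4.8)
The plan is to exploit that the target $\Omega:=\mathbb{H}^{+}\setminus\bigcup_{i=1}^{m}\Gamma_{i}$ is contained in $\mathbb{H}^{+}$, so that $w$ is a holomorphic self-map of $\mathbb{H}^{+}$ with $\Im w>0$, i.e.\ a Herglotz (Pick) function, and to read off the derivative bound from the real-variable behaviour of $w$ on the positive semi-axis. First I would record the boundary correspondence. Since every slit emanates from a point $\lambda_{i}<0$ and lies in the open upper half-plane, a neighbourhood of $w=1$ in $\overline{\Omega}$ is an ordinary half-disc, and the same holds for $z=1$ in $\overline{\mathbb{H}^{+}}$; by Carath\'eodory's boundary theorem together with the Schwarz reflection principle, $w$ extends holomorphically across a neighbourhood of $z=1$, which proves the first assertion. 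Moreover $w$ maps the slit-free boundary arc $(0,\infty)$ homeomorphically and orientation-preservingly onto the slit-free arc $(0,\infty)$ of $\partial\Omega$ (the points $0,1,\infty$ are fixed and their order is preserved), so $w$ is real-valued on $(0,\infty)$ and $w'(1)>0$ is real.

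Next I would invoke the Herglotz representation
\[
w(z)=\alpha z+\beta+\int_{\mathbb{R}}\Bigl(\frac{1}{t-z}-\frac{t}{1+t^{2}}\Bigr)\,d\mu(t),\qquad \alpha\ge 0,\ \beta\in\mathbb{R},\ \mu\ge 0,
\]
valid because $\Im w>0$ on $\mathbb{H}^{+}$. The crucial point is that, by the boundary correspondence just described, $w$ continues real-analytically with real values across $(0,\infty)$, so $\Im w\equiv 0$ there and $\mu$ carries no mass on $(0,\infty)$; hence $\operatorname{supp}\mu\subseteq(-\infty,0)$ (the mass sits on the preimages of the slits). Differentiating under the integral for $x\in(0,\infty)$, where $t-x<0$ on $\operatorname{supp}\mu$, yields
\[
w''(x)=\int_{\mathbb{R}}\frac{2\,d\mu(t)}{(t-x)^{3}}<0\qquad(x>0),
\]
unless $\mu\equiv 0$. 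Thus $w$ is concave on $(0,\infty)$, and strictly so whenever $\mu\neq0$.

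Finally I would combine this concavity with the normalization $w(0)=0$, $w(1)=1$. A concave function lies above its chords, so $w(x)\ge x$ on $[0,1]$; the function $h(x):=w(x)-x$ is therefore non-negative on $[0,1]$ and vanishes at $x=1$, whence its left derivative satisfies $h'(1)=\lim_{x\to1^{-}}h(x)/(x-1)\le 0$, that is $w'(1)\le 1$. For the equality case: if $\mu\not\equiv0$ then $w$ is strictly concave on $(0,\infty)$, $h$ is strictly concave with $h(0)=h(1)=0$, and hence $h'(1)<0$, giving $w'(1)<1$. Therefore $w'(1)=1$ forces $\mu\equiv0$, so $w$ is affine, and $w(0)=0$, $w(1)=1$ give $w(z)\equiv z$.

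I expect the main obstacle to be the rigorous justification that the representation measure carries no mass on $[0,\infty)$ — equivalently, that $w$ is real and regular on the positive semi-axis. This rests on the Carath\'eodory boundary correspondence together with the geometric hypothesis that the slits are attached at negative points and remain in the open upper half-plane, so that $[0,\infty)$ is a slit-free boundary arc fixed setwise by $w$. Once this is secured, the concavity of $w$ on $(0,\infty)$ (which is precisely the real-variable form of the boundary Schwarz lemma at the fixed point $1$) and the resulting estimate $w'(1)\le 1$, with its equality analysis, are routine.
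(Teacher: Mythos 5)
Your proof is correct, but it takes a genuinely different route from the paper's. Both arguments begin identically: the preimages of the slits lie on $\mathbb{R}^{-}$, so Schwarz reflection across the positive axis gives holomorphy at $z=1$ and the reality of $w'(1)$. From there the paper transplants the reflected map to the unit disk: $\phi(\zeta)=(\zeta+1)^2(\zeta-1)^{-2}$ maps $|\zeta|<1$ onto $\mathbb{C}\setminus\mathbb{R}^{-}$, the reflected $w$ is a holomorphic self-map of $\mathbb{C}\setminus\mathbb{R}^{-}$ fixing $1$, and the classical Schwarz lemma applied to $\Phi=\phi^{-1}\circ w\circ\phi$ (which fixes $0$, with $\Phi'(0)=w'(1)$ by the chain rule) gives at once $w'(1)\le 1$ and, in the equality case, $\Phi=\mathrm{id}$, i.e. $w\equiv z$. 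You instead regard $w$ as a Pick function, invoke the Nevanlinna--Herglotz representation, localize the measure on $\mathbb{R}^{-}$ via Stieltjes inversion (since $\Im w\equiv0$ on $(0,\infty)$), deduce strict concavity of $w$ on the positive axis unless $\mu\equiv0$, and finish with a chord argument at the fixed points $0$ and $1$. The paper's proof is shorter and its equality case comes for free from the equality case of Schwarz's lemma; your proof is a real-variable boundary form of the same principle which never uses univalence (only $\Im w>0$ and reality of $w$ on $(0,\infty)$) and yields extra quantitative information, e.g. $w''<0$ on all of $(0,\infty)$ and, in principle, a lower bound for $1-w'(1)$ in terms of the mass of $\mu$. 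The steps you flag as needing care are indeed routine: absence of mass on $[0,\infty)$ follows from Stieltjes inversion on intervals where $w$ extends continuously with real values (and $0\notin\operatorname{supp}\mu$ because the slit preimages stay in $\mathbb{R}^{-}$, so $w$ is regular at $0$ as well); differentiation under the integral is dominated convergence, using $|t-x|= x+|t|$ for $t\le 0<x$; and the implication from strict concavity of $h$ with $h(0)=h(1)=0$ to $h'(1)<0$ follows from the chord-slope inequality $h'(1)\le \bigl(h(1)-h(1/2)\bigr)/(1/2)=-2h(1/2)<0$.
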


\begin{proof}
There are $m$ segments $\left[\alpha_{1},\beta_{1}\right],\left[\alpha_{2},\beta_{2}\right],\ldots,\left[\alpha_{m},\beta_{m}\right]\in\mathbb{R}^{-}$ which the conformal mapping $w\left(z\right)$  maps to the slits along the arcs $\Gamma_{1}$, $\Gamma_{2},\ldots,\Gamma_{m}$. We have $\alpha_{i}<\beta_{i}<\alpha_{i+1}<\beta_{i+1}$, $1\le i\le m-1$.

By the Riemann-Schwarz symmetry principle, $w\left(z\right)$ can be  analytically continued  into the lower half-plane through $\mathbb{R}^+$. In particular, $w(z)$ is holomorphic at the point $z=1$ and, using the geometric meaning of the derivative argument, we conclude that $\Im\left.\frac{dw}{dz}\right|_{z=1}=0$. The function $\phi\left(\zeta\right)=(\zeta+1)^2(\zeta-1)^{-2}$ provides a conformal mapping of the unit circle $\left|\zeta\right|<1$ onto a plane with a slit along $\mathbb{R^{-}}$.

Applying the Schwarz lemma to the  holomorphic in $|\zeta|<1$ function $\Phi\left(\zeta\right)=\left(\phi^{-1}\circ w\circ\phi\right)\left(\zeta\right)$ we obtain the required assertion.
\end{proof}

 Now we will return to consideration of the family of functions $f(z,t)$, defined by  \eqref{2.3}. We introduce the function $w\left(z,t\right)=f^{-1}\left(f\left(z,T\right),t\right)$, where $0\leq t\leq T$, conformally mapping $\mathbb{H}^{+}$ onto $\mathbb{H}^{+}$ with $m$ slits  and satisfying $w(0,t)=0$, $w(1,t)=1$, $w(\infty,t)=\infty$. The points on $\partial H$ from which the slits grow are located on $\mathbb{R^{-}}$. By the Caratheodory kernel convergence theorem, $w(z,\,\cdot\,)$ is continuous on $\left[0,T\right]$ for a fixed $z\in \mathbb{H}^{+}$. By virtue of Lemma~\ref{lemm1} and the Weierstrass theorem on the convergence of a sequence of holomorphic functions,
 \begin{equation}\label{q1wz}
 \frac{dw}{dz}\left(1,t\right)=q(t)
 \end{equation}
 is a non-negative continuous strictly increasing function of the parameter $t$. Indeed, if $0\leq s< t\leq T$ then, by Lemma~\ref{lemm1}, applied to the function $w^{-1}\left(w\left(z,s\right),t\right)$, we conclude that $\frac{dw}{dz}\left(1,s\right)\left(\frac{dw}{dz}\left(1,t\right)\right)^{-1}<1$.

For convenience,  we will reparametrize our family, taking the value $\tau=\ln q(t)$ as a new parameter. In what follows, for simplicity of notation, we will assume that $q(t)=\exp(t)$; such a parametrization will be called canonical.

Now consider the function $h(z,t,s)=f^{-1}(f(z,t),s)$, $0\le s\le t\le T$, conformally mapping $\mathbb{H}^{+}$ onto $\mathbb{H}^{+}$ with $m$ slits; it keeps the points $0$, $1$, and $\infty$. According to the Schwarz formula for the half-plane, we obtain:
\begin{equation}\label{q}
    h\left(z,t,s\right)=z+\frac{z\left(z-1\right)}{\pi}\sum_{i=1}^{m}I_{i}\left(z,t,s\right),
\end{equation}
where
\[
I_{i}\left(z,t,s\right)=\int_{\alpha_{i}\left(s\right)}^{\beta_{i}\left(s\right)}\frac{\Im h\left(x,t,s\right)}{\left(x-z\right)\left(x-1\right)x}dx.
\]
Since $h\left(w\left(z,t\right),t,s\right)=w\left(z,s\right)$, this implies
\begin{equation}\label{st}
	w\left(z,s\right)-w\left(z,t\right)=\frac{w\left(z,t\right)\left(w\left(z,t\right)-1\right)}{\pi}\sum_{i=1}^{m}I_{i}\left(w\left(z,t\right),t,s\right).
\end{equation}
Going to the limit, as $z\rightarrow1$, from \eqref{q} and the definition of $h(z,t,s)$ we obtain
\begin{equation*}
    \exp\left(s-t\right)=\frac{dh}{dz}\left(1,t,s\right)=1+\frac{1}{\pi}\sum_{i=1}^{m}J_{i}\left(t,s\right),
\end{equation*}
where
\begin{equation*}
   J_{i}\left(t,s\right)= I_{i}\left(1,t,s\right)=\int_{\alpha_{i}\left(s\right)}^{\beta_{i}\left(s\right)}\frac{\Im h\left(x,t,s\right)}{\left(x-1\right)^{2}x}\,dx<0.
\end{equation*}
Therefore, we have
\begin{equation}\label{st}
s-t\sim  \exp\left(s-t\right)-1 =\frac{1}{\pi}\sum_{i=1}^{m}J_{i}\left(t,s\right), \quad s\to t.
\end{equation}

By the mean value theorem, we have
\begin{equation*}
    I_{i}\left(w\left(z,t\right),t,s\right)=\int_{\alpha_{i}\left(s\right)}^{\beta_{i}\left(s\right)}\frac{\Im h\left(x,t,s\right)}{\left(x-1\right)^{2}x}dx\cdot K_{i}=J_{i}\left(t,s\right)\cdot K_{i},
\end{equation*}
where
\begin{equation*}
    K_{i}=\Re\frac{x_{i,1}-1}{x_{i,1}-w\left(z,t\right)}+i\Im\frac{x_{i,2}-1}{x_{i,2}-w\left(z,t\right)}, \quad x_{i,1},x_{i,2}\in\left[\alpha_{i}\left(s\right),\beta_{i}\left(s\right)\right],
\end{equation*}
therefore, from \eqref{st} we obtain
\begin{equation}\label{partw}
    \frac{\partial w}{\partial t}=\lim_{s\rightarrow t}\frac{w\left(z,t\right)\left(w\left(z,t\right)-1\right)}{\pi}\sum_{i=1}^{m}\left(J_{i}\left(t,s\right)\cdot K_{i}\right)\left(\frac{1}{\pi}\sum_{k=1}^{m}J_{k}\left(t,s\right)\right)^{-1}.
\end{equation}

Below we will show that if the lengths of the slits smoothly depend on each other, for every $i$,  $1\le i\le m$, there exist the limit
\begin{equation*}
    C_{i}(t)=\lim_{s\rightarrow t}\left(J_{i}\left(t,s\right)\left(\sum_{k=1}^{m}J_{k}\left(t,s\right)\right)^{-1}\right).
    \label{lim}
\end{equation*}

We note that $C_{i}(t)\ge0$ as the limit of the ratio of two negative numbers. In addition, there is a relationship between these coefficients:
\begin{equation*}
    \sum_{i=1}^{m}C_{i}(t)=1.
\end{equation*}

Then, taking into account the introduced notation, we obtain
\begin{equation*}
    \frac{\partial w}{\partial t}=w\left(z,t\right)\left(w\left(z,t\right)-1\right)\sum_{i=1}^{m}C_{i}(t)\frac{\lambda_{i}(t)-1}{\lambda_{i}(t)-w\left(z,t\right)}.
\end{equation*}
Recalling that  $w\left(z,t\right)=f^{-1}\left(f\left(z,T\right),t\right)$, we get
\begin{equation}
    \frac{\partial f}{\partial t}=-\frac{\partial f}{\partial z}z\left(z-1\right)\sum_{i=1}^{m}C_{i}(t)\frac{\lambda_{i}(t)-1}{\lambda_{i}(t)-z}.
    \label{2.16}
\end{equation}

At last, we should note that, in fact, we consider the limit if   $s$ approaches $t$ from the left; the limit from the right is studied in a similar way.\medskip

\begin{rem}\label{r1}  If we did not use the change to the canonical parameter, we would get the equation similar to \eqref{2.16} but with the  additional multiplier $d(\ln q(t))/dt$  in the right-hand side; under some natural conditions, we will show the smoothness of the function $q(t)$ in Subsection~6.2. Therefore, the form of \eqref{2.16} do not change, since we can multiply  every $C_i(t)$ by this expressions and denote the result of multiplication, say, by $\widetilde{C}_i(t)$.
\end{rem}

\begin{rem}\label{r2} We can consider similar families of conformal mappings with normalization such that $f(0,t)$, $f(1,t)$  and $f(\infty,t)$ do not depend on $t$ but are not necessarily angular points of the corresponding polygonal domains. Then, as it is easy to see, the form of the derived equation does not change.
\end{rem}

\section{ODE system for accessory parameters}\label{fam}
Now we will derive a system of equations for determining the accessory parameters.

\begin{thr}
The accessory parameters satisfy on $[0,T]$ the following system of differential equations:
\[
    -\frac{d\lambda_{p}}{dt}=\lambda_{p}(t)\left(\lambda_{p}(t)-1\right)\left(\sum_{l=1,l\neq p}^{m}C_{l}(t)\frac{\lambda_{l}(t)-1}{\lambda_{p}(t)-\lambda_{l}(t)}+C_{p}(t)\left(\lambda_{p}(t)-1\right)\right.
\]
\[
    \left.\times\sum_{l=1,l\neq p}^{m}\frac{1}{\lambda_{p}(t)-\lambda_{l}(t)}\right)+C_{p}(t)\left(2\lambda_{p}(t)-1\right)\left(\lambda_{p}(t)-1\right)+C_{p}(t)\lambda_{p}(t)
\]
\begin{equation}
    \times\left(\lambda_{p}(t)-1\right)^{2}\left(\sum_{i=1}^{m}\sum_{j=1}^{2}\frac{\sigma_{ij}}{\lambda_{p}(t)-a_{ij}(t)}+\sum_{k=1}^{n-1}\frac{\sigma_{k}}{\lambda_{p}(t)-a_{k}(t)}\right),\ 1\leq p\leq m,
    \label{2.17}
\end{equation}
\begin{equation}
    \frac{d a_{l}}{dt}=-a_{l}(t)\left(a_{l}(t)-1\right)\sum_{i=1}^{m}C_{i}(t)\frac{\lambda_{i}(t)-1}{a_{l}(t)-\lambda_{i}(t)},\ 1\leq l\leq n-3,
    \label{2.18}
\end{equation}
\begin{equation}
   \frac{d a_{ij}}{dt}=-a_{ij}(t)\left(a_{ij}(t)-1\right)\sum_{l=1}^{m}C_{l}(t)\frac{\lambda_{l}(t)-1}{a_{ij}(t)-\lambda_{l}(t)},\ 1\leq i\leq m,\ 1\leq j\leq2,
   \label{2.19}
\end{equation}
\begin{equation}
    \frac{1}{c(t)}\frac{dc}{dt}=-\alpha_{n}\sum_{i=1}^{m}C_{i}(t)\left(\lambda_{i}(t)-1\right).
    \label{2.20}
\end{equation}\\
\end{thr}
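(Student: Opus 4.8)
The plan is to extract the four groups of ODEs by turning the Loewner equation \eqref{2.16} into a single rational-function identity in $z$ and then comparing principal parts. Write $f'=\partial f/\partial z$ and abbreviate the Loewner driving term as
\[
\Phi(z,t)=-z(z-1)\sum_{i=1}^{m}C_i(t)\frac{\lambda_i(t)-1}{\lambda_i(t)-z},
\]
so that \eqref{2.16} reads $\partial_t f=f'\,\Phi$. The key observation is that $\ln f'$ can be differentiated in $t$ in two independent ways. On one hand, taking the logarithm of the explicit product in \eqref{2.3} and differentiating in $t$ gives
\[
\partial_t\ln f'=\frac{\dot c}{c}-\sum_{l=1}^{m}\frac{\dot\lambda_l}{z-\lambda_l}-\sum_{i=1}^{m}\sum_{j=1}^{2}\frac{\sigma_{ij}\dot a_{ij}}{z-a_{ij}}-\sum_{k=1}^{n-1}\frac{\sigma_k\dot a_k}{z-a_k},
\]
where the dot denotes $d/dt$. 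On the other hand, differentiating $\partial_t f=f'\Phi$ in $z$ and dividing by $f'$ yields $\partial_t\ln f'=\partial_z\partial_t f/f'=\Phi\,\partial_z\ln f'+\Phi'$, where $\partial_z\ln f'=\sum_l (z-\lambda_l)^{-1}+\sum_{i,j}\sigma_{ij}(z-a_{ij})^{-1}+\sum_k\sigma_k(z-a_k)^{-1}$.

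First I would equate the two expressions for $\partial_t\ln f'$. Both sides are rational in $z$ with poles only at the points $\lambda_p$, $a_{ij}$, $a_k$, so the identity is equivalent to the equality of all principal parts together with the value at $z=\infty$. Since $a_{n-2}=0$ and $a_{n-1}=1$ are held fixed and the factor $z(z-1)$ makes $\Phi$ vanish at those two points, the corresponding residues vanish automatically on both sides; thus only the moving parameters produce nontrivial relations.

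The three families of equations then come from three residue computations. At a point $a_k$ ($1\le k\le n-3$) or $a_{ij}$, the function $\Phi$ is regular (because $a_k,a_{ij}\neq\lambda_i$), so the right-hand side has a simple pole with residue $\sigma_k\Phi(a_k)$, resp. $\sigma_{ij}\Phi(a_{ij})$; matching with the left-hand side residues $-\sigma_k\dot a_k$, resp. $-\sigma_{ij}\dot a_{ij}$, and using $\dot a=-\Phi(a)$ together with $\Phi(a)=-a(a-1)\sum_iC_i(\lambda_i-1)/(\lambda_i-a)$ gives exactly \eqref{2.18} and \eqref{2.19}. The delicate case is $z=\lambda_p$: here $\Phi$ itself has a simple pole, so $\Phi\,\partial_z\ln f'$ has a \emph{double} pole, which must cancel against the double pole of $\Phi'$ before a simple-pole residue can be read off. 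I would therefore expand $\Phi$ near $\lambda_p$ to second order, $\Phi=\frac{C_p\lambda_p(\lambda_p-1)^2}{z-\lambda_p}+\bigl[C_p(2\lambda_p-1)(\lambda_p-1)+\lambda_p(\lambda_p-1)\sum_{l\neq p}C_l\frac{\lambda_l-1}{\lambda_p-\lambda_l}\bigr]+O(z-\lambda_p)$, check that the leading $(z-\lambda_p)^{-2}$ contributions cancel, and collect the resulting $(z-\lambda_p)^{-1}$ coefficient; equating it to the left-hand side residue $-\dot\lambda_p$ reproduces \eqref{2.17}.

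Finally, \eqref{2.20} follows from the behaviour as $z\to\infty$. The left-hand side tends to $\dot c/c$, while $\Phi(z)=z\sum_iC_i(\lambda_i-1)+O(1)$ and $\partial_z\ln f'\sim N/z$ with $N=m+\sum_{i,j}\sigma_{ij}+\sum_k\sigma_k$ the total exponent of $f'$ at infinity, so the right-hand side tends to $(N+1)\sum_iC_i(\lambda_i-1)$. The finiteness of $A_n=f(\infty)$ with interior angle $\pi\alpha_n$ forces $N=-(\alpha_n+1)$, i.e.\ $N+1=-\alpha_n$, which turns this limit into \eqref{2.20}. I expect the residue computation at $z=\lambda_p$ to be the main obstacle, both because of the required second-order cancellation of the double pole and because it is the only place where the regular part of $\Phi$ enters; the remaining matchings are routine, and the only external input needed is the degree-at-infinity identity $N+1=-\alpha_n$.
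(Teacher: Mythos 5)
Your proposal is correct and follows essentially the same route as the paper: the authors likewise equate the two expressions for $\partial_t\ln(\partial f/\partial z)$ --- one obtained from the Loewner equation (their PDE for $\phi=\ln(\partial f/\partial z)$, which is exactly your identity $\partial_t\ln f'=\Phi\,\partial_z\ln f'+\Phi'$), the other from logarithmic differentiation of the Schwarz--Christoffel product --- and then read off the system by matching residues at $\lambda_p$, $a_{ij}$, $a_k$ and the free terms. The details you supply (cancellation of the double poles at $\lambda_p$ and the degree-at-infinity identity $N+1=-\alpha_n$) are precisely what the paper leaves implicit in its one-line residue comparison.
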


\begin{proof}
Consider  the  function $\phi\left(z,t\right)=\ln\frac{\partial f}{\partial z}$. Because of  \eqref{2.16}, it satisfies the differential equation
\[
    -\frac{\partial\phi}{\partial t}=\frac{\partial\phi}{\partial z}z\left(z-1\right)\sum_{i=1}^{m}C_{i}(t)\frac{\lambda_{i}(t)-1}{\lambda_{i}(t)-z}
\]
\begin{equation}
    +\sum_{i=1}^{m}C_{i}(t)\frac{\lambda_{i}(t)\left(\lambda_{i}(t)-1\right)^{2}}{\left(\lambda_{i}(t)-z\right)^{2}}-\sum_{i=1}^{m}C_{i}(t)\left(\lambda_{i}(t)-1\right).
    \label{2.21}
\end{equation}
From \eqref{2.3} it follows that
\begin{equation*}
    \phi\left(z,t\right)=\ln c(t)+\sum_{l=1}^{m}\ln\left(z-\lambda_{l}(t)\right)
    +\sum_{i=1}^{m}\sum_{j=1}^{2}\sigma_{ij}\ln\left(z-a_{ij}(t)\right)+\sum_{k=1}^{n-1}\sigma_{k}\ln\left(z-a_{k}(t)\right).
\end{equation*}
Therefore,
\begin{equation}
    -\frac{\partial\phi}{\partial t}=-\frac{1}{c(t)}\frac{dc}{dt}+\sum_{l=1}^{m}\frac{1}{z-\lambda_{l}(t)}\frac{d\lambda_{l}}{dt}
    +\sum_{i=1}^{m}\sum_{j=1}^{2}\frac{\sigma_{ij}}{z-a_{ij}(t)}\frac{d a_{ij}}{dt}+\sum_{k=1}^{n-1}\frac{\sigma_{k}}{z-a_{k}(t)}\frac{d a_{k}}{dt},
    \label{2.23}
\end{equation}
\begin{equation}
    \frac{\partial\phi}{\partial z}=\sum_{l=1}^{m}\frac{1}{z-\lambda_{l}(t)}+\sum_{i=1}^{m}\sum_{j=1}^{2}\frac{\sigma_{ij}}{z-a_{ij}(t)}+\sum_{k=1}^{n-1}\frac{\sigma_{k}}{z-a_{k}(t)}.
    \label{2.24}
\end{equation}

Substituting (\ref{2.23}) and (\ref{2.24}) into  (\ref{2.21}), we get
\[
    -\frac{1}{c(t)}\frac{dc}{dt}+\sum_{l=1}^{m}\frac{1}{z-\lambda_{l}(t)}\frac{d\lambda_{l}}{dt}+\sum_{i=1}^{m}\sum_{j=1}^{2}\frac{\sigma_{ij}}{z-a_{ij}(t)}\frac{d a_{ij}}{dt}
    +\sum_{k=1}^{n-1}\frac{\sigma_{k}}{z-a_{k}(t)}\frac{d a_{k}}{dt}=
\]
\[
    =\left(\sum_{l=1}^{m}\frac{1}{z-\lambda_{l}(t)}+\sum_{i=1}^{m}\sum_{j=1}^{2}\frac{\sigma_{ij}}{z-a_{ij}(t)}+\sum_{k=1}^{n-1}\frac{\sigma_{k}}{z-a_{k}(t)}\right)z\left(z-1\right)\sum_{i=1}^{m}C_{i}(t)\frac{\lambda_{i}(t)-1}{\lambda_{i}(t)-z}
\]
\begin{equation*}
    +\sum_{i=1}^{m}C_{i}(t)\frac{\lambda_{i}(t)\left(\lambda_{i}(t)-1\right)^{2}}{\left(\lambda_{i}(t)-z\right)^{2}}-\sum_{i=1}^{m}C_{i}(t)\left(\lambda_{i}(t)-1\right).
\end{equation*}
Comparing the residues of the left- and right-hand parts of the last relation at the points $\lambda_{j}(t), a_{ij}(t), a_{l}(t)$ and the free terms,  we obtain the system (\ref{2.17})--(\ref{2.20}).
\end{proof}

We note that the converse statement is also true. If the accessory parameters satisfy the system (\ref{2.17})--(\ref{2.20}), then the family of mappings $f(z,t)$ satisfies (\ref{2.16}). This implies that the solution to the ODE is unique, since the family of conformal mappings given by the Loewner equation is uniquely determined.

Integration of equations (\ref{2.17})--(\ref{2.20}) gives the dependence of the accessory parameters on the parameter $t$.

Let us now describe how to define accessory parameters for a given polygon $P$. First, one should fix the polygon $P_0$ inside of which slits grow. Its specific form is chosen from considerations of convenience or the specifics of the problem, while all accessory parameters for $P_0$ must be known. Next, the Cauchy problem is solved, in which the initial conditions (for $t=0$) are the values of the accessory parameters for the polygon $P_0$. The values of the parameters $a_{i}$, $a_{ij}$, $\lambda_{i}$ at $t=T$ give the desired values for the polygon $P$.

If the shape of $P$ is rather complicated and it is not possible to cut it out of $P_0$ in one step, the above procedure is repeated. At subsequent stages, the initial conditions for the Cauchy problem are determined by the polygonal domain obtained at the previous step. Since the domains remains polygonal after cutting off, this procedure can be carried out sequentially any finite number of times.

Note that when integrating the system of equations (\ref{2.17})--(\ref{2.20}), there is a difficulty associated with the presence of degeneracy at the initial and, possibly, at the final moments.

As $t\to 0$, three the accessory parameters corresponding to the end of the slit and two vertices at its base, lying on different sides of the slit merge into one. When solving the system numerically, one can get rid of the first problem by slightly changing the initial data, for example, by taking $a_{ij}\left(0\right)=\lambda_i\left(0\right)+(-1)^{j}\epsilon_{ij}$, $1\leq i\leq m$, $j=1$, $2$ where $\epsilon_{ij}>0$ is sufficiently small\footnote{In the numerical examples, given below, we took $\epsilon_{ij}=10^{-15}$.}, and thus removing the degeneracy. However, with a rigorous justification of this method, the question arises about the stability of the system with respect to changes in the initial conditions.

As $t\to T$, a degeneracy may appear, since the end of the slit $\Lambda_{k}(t)$ approaches some point of the boundary $E_{k}$. 
As we mentioned in Introduction, the polygonal trajectory of the slit subdivide the polygonal domain $P'$ into two subdomains, $P$ and another one, denoted it by $P''$. According to the G.~D.~Suvorov convergent theorems \cite{suvorov}, the preimages of all the vertices lying on the boundary of $P''$ and the preimage of the endpoint of the slit converge to the same point on $\partial \mathbb{H}^+$, as $t\to T$. This fact may cause complications in numerical calculations.

\section{Solution of the system with the help of  power series}

After we have derived the system of differential equations for determining the accessory parameters, the question arises of how to solve it.

Consider a special case. Assume that $C_i(t)$ are some given real-analytic functions of the variable $x=\sqrt{t}$, $t\in [0,T]$. Then they can be represented by power series
\begin{equation*}
    C_{p}(t)=\widetilde{C}_{p}\left(x\right)=\sum_{n=0}^{\infty}\widetilde{C}_{p,n}x^{n},\ 1\leq p\leq m.
\end{equation*}

Similarly to the case of one slit, we use the change of variable $x=\sqrt{t}$ in  the system (\ref{2.17})--(\ref{2.20}) and seek for its solution in the form of power series:
\begin{equation}
    \lambda_{p}\left(x\right)=\sum_{n=0}^{\infty}\lambda_{p,n}x^{n}, \quad 1\le p \le m,
    \label{rad1}
\end{equation}
\begin{equation}
   a_{k}\left(x\right)=\sum_{m=0}^{\infty}a_{k,m}x^{m},\quad  1\le l\le n-3,
   \label{rad2}
\end{equation}
\begin{equation}
    a_{ij}\left(x\right)=\lambda_{i,0}+\sum_{m=1}^{\infty}a_{ij,m}x^{m},\quad 1\le  i\le m,\ j=1,2.
    \label{rad3}
\end{equation}
Substituting these expansions into the system (\ref{2.17})--(\ref{2.20}) and equating the coefficients at the first power of $x$ in the left- and right-hand parts, we obtain:
\begin{equation*}
    \lambda_{p,1}=q_{p}\sum_{j=1}^{2}\frac{\sigma_{pj}}{\lambda_{p,1}-a_{pj,1}}\,, \quad 1\le p \le m,
\end{equation*}
\begin{equation*}
    a_{ij,1}=\frac{q_{i}}{a_{ij,1}-\lambda_{i,1}}\,,\quad 1\le  i\le m,\ j=1,2,
\end{equation*}
\begin{equation*}
    a_{l,1}=0,\quad  1\le l\le n-3,
\end{equation*}
where $q_{\mu }=-2\widetilde{C}_{\mu ,0}\lambda_{\mu ,0}\left(\lambda_{\mu,0}-1\right)^{2}$. From these equalities it follows that
\begin{equation*}
    \begin{cases}
   \displaystyle \lambda_{i,1}=q_{i}\sum_{j=1}^{2}\frac{\sigma_{ij}}{\lambda_{i,1}-a_{ij,1}}\,,\\[2mm]
   \displaystyle    a_{ij,1}=-\frac{q_{i}}{\lambda_{i,1}-a_{ij,1}}\,,
    \end{cases}\Longrightarrow \ \ \begin{cases}
      \displaystyle \lambda_{i,1}=\left(\alpha_{i1}-\alpha_{i2}\right)\sqrt{\frac{q_{i}}{\alpha_{i1}\alpha_{i2}}}\,, \\[2mm]
      \displaystyle a_{i1,1}=-\sqrt{q_{i}\frac{\alpha_{i2}}{\alpha_{i1}}}\,, \\[3mm]
      \displaystyle a_{i2,1}=\sqrt{q_{i}\frac{\alpha_{i1}}{\alpha_{i2}}}\,.
    \end{cases}
\end{equation*}
Similarly, comparing the coefficients at $x^s$, we obtain
\begin{equation*}
    a_{i1,1}\lambda_{i,s}+\left(sa_{i2,1}-a_{i1,1}\right)a_{i1,s}=\phi_{i1},
\end{equation*}
\begin{equation*}
    a_{i2,1}\lambda_{i,s}+\left(sa_{i1,1}-a_{i2,1}\right)a_{i2,s}=\phi_{i2},
\end{equation*}
\begin{equation*}
    \lambda_{i,s}+\sum_{k=1}^{n-3}\sigma_{k}a_{k,s}+\sum_{j=1}^{2}\sigma_{ij}a_{ij,s}=\phi_{i}^{*},\ 1\leq i\leq m,
\end{equation*}
\begin{equation*}
    a_{k,s}=\phi_{k},\ 1 \leq k \leq n-3,
\end{equation*}
where $\phi_k,\ \phi_{ij}$ and $\phi_i^*$ only depend  on the coefficients of the preceding orders.

Further, arguing similarly to the case of one slit (see \cite{Alexandrov} and \cite{Gutlyansky}), we obtain that the power series (\ref{rad1})--(\ref{rad3}) converge and represent the only solution that is real-analytic with respect to the variable $x=\sqrt{t}$ in some neighborhood of the point $0$.

However, as we will show below, another case is more important for our purposes. It is interesting to study the case where the functions $C_i(t)$ are chosen not arbitrarily, but in such a way that  provides a given dynamics of the slit lengths (up to a parametrization). In this regard, we will discuss this case in the next section.

\section{The control of slit lengths}

Now we will study  how the lengths of the slits depend on the parameter $t$. Consider the dynamics of the endpoint $\Lambda_{r}(t)$ of the $r$-th slit that is the image of the point $\lambda_r(t)$ under the mapping $f(z,t)$, i.e. $\Lambda_{r}(t)=f(\lambda_r(t),t)$. Differentiating this equality, with the help of \eqref{2.3}, we can obtain an expression for the growth rate $v_{r}(t)$  of the length of the $r$-th slit:
\begin{equation}\label{rate}
    v_{r}(t)=\biggl|\frac{d \Lambda_{r}(t)}{dt}\biggr|=|c(t)|A_r(t)C_{r}(t),
\end{equation}
where
\begin{multline}
    A_r(t)=\prod_{i=1}^{m}\prod_{j=1}^{2}|\lambda_{r}(t)-a_{ij}(t)|
    ^{\sigma_{ij}}\prod_{k=1}^{n-1}|\lambda_{r}(t)-a_{k}(t)|^{\sigma_{k}}
\\
       \times\, |\lambda_{r}(t)|\,|\lambda_{r}(t)-1|^{2}\prod_{i=1,i\neq r}^{m}|\lambda_{r}(t)-\lambda_{i}(t)|\,.
    \label{Art}
\end{multline}
We see that $v_{r}(t)$ is proportional to $C_r(t)$ and does not depend explicitly on the other functions $C_j(t)$, $j\neq r$. This fact can be used to control the ratio between the slit lengths.

\subsection{The case of two slits}

\hfill\\Consider the case $m=2$, i.e. when there are only two slits. Assume, for simplicity, that the ratio of the velocities of their lengths is a given positive number $\alpha$, i.~e.
\begin{equation}\label{alph}
\frac{v_{1}(t)}{v_{2}(t)}\equiv \alpha,
\end{equation}
where $v_{1}(t)$ and $v_{2}(t)$ are the growth rates of the lengths of the first and second slits.
Then from \eqref{rate} and \eqref{alph} we obtain
\begin{equation*}
    \frac{C_{1}(t)}{C_{2}(t)}=\alpha\,\frac{A_{2}(t)}{A_{1}(t)}\,.
\end{equation*}
Moreover, $C_{1}(t)+C_{2}(t)=1$, therefore,
\begin{equation}\label{C12}
C_1(t)=\frac{\alpha A_2(t)}{A_1(t)+\alpha A_2(t)}\,, \quad C_2(t)=\frac{A_1(t)}{A_1(t)+\alpha A_2(t)}\,.
\end{equation}
Here $A_1(t)$ and $A_2(t)$ are given by \eqref{Art} where we put $m=2$. If we substitute the obtained expressions for $C_1(t)$ and $C_2(t)$ into the right-hand sides of system \eqref{2.17}--\eqref{2.20}, then we obtain the system to determine the accessory parameters of conformal mappings $f(z,t)$  with the needed ratio $\alpha$ of the growth rates of the lengths of the slits.

Now we will give some examples.\medskip

\begin{ex} Consider a family of conformal mappings $f(z,t)$ from the upper half-plane onto the upper half-plane with two rectilinear slits going orthogonally upwards from given points $x_1$ and $x_{2}$, $x_{1}<x_{2}<0$; for any $t$ (Fig.~\ref{Dva}).

\begin{figure}[h]
	\centering
	\includegraphics[scale=0.3]{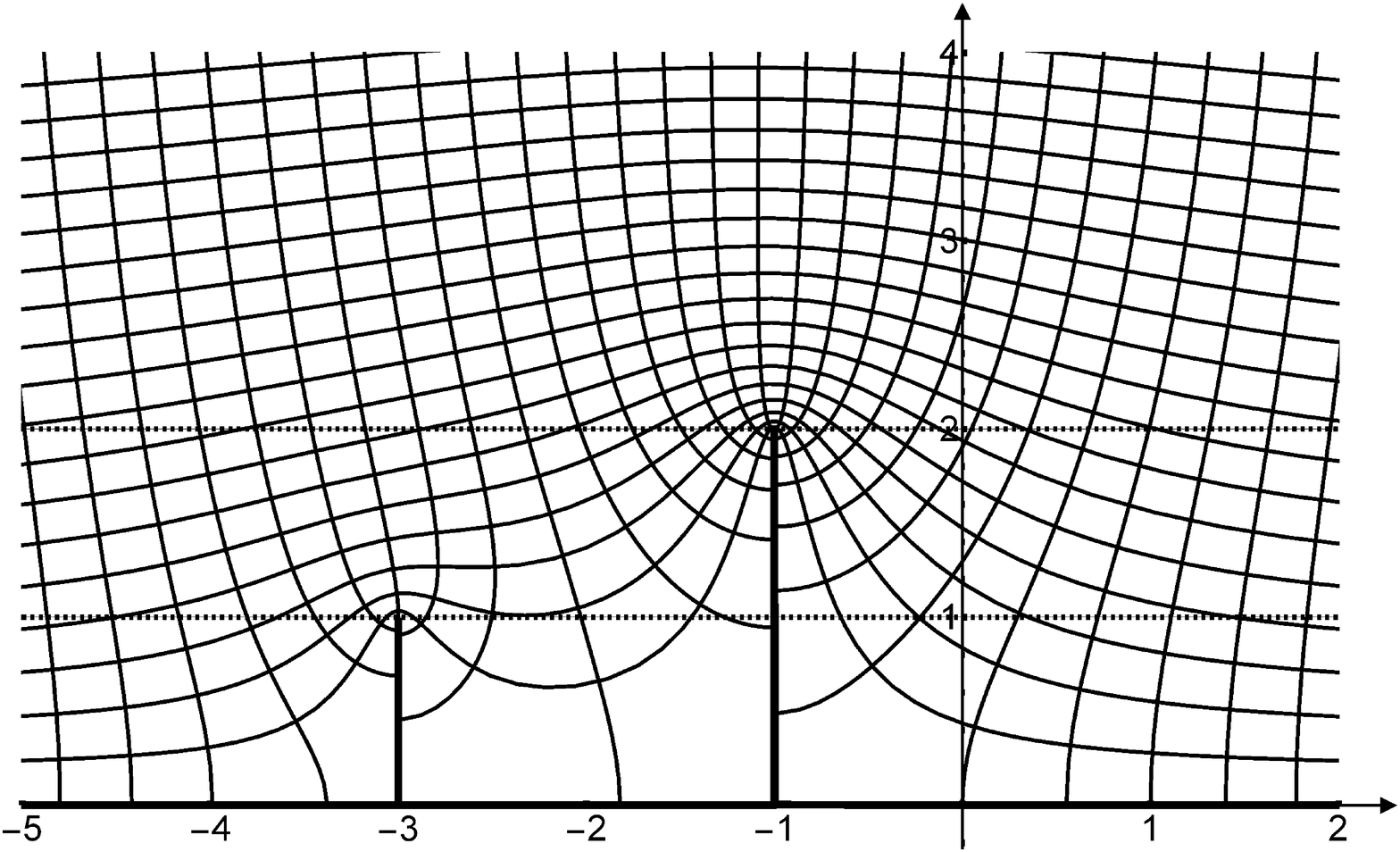}
	\caption{The image of the Cartesian grid under the conformal mapping of the upper half-plane onto a half-plane with two slits.}
		\label{Dva}
\end{figure}

The conformal mapping $f(z,t)$ has the form
\begin{equation}
    f\left(z,t\right)=c(t)\int_{0}^{z}\prod_{l=1}^{2}\left(\zeta-\lambda_{l}(t)\right)\prod_{i,j=1}^{2}\left(\zeta-a_{ij}(t)\right)^{-\frac{1}{2}}d\zeta,
    \label{conftwo}
\end{equation}
where the branch of the integrand is fixed such that it is positive for large positive real $\zeta$;  moreover, $f(z,t)$ is equal to the identity mapping at $t=0$.
From (\ref{conftwo}) it follows that the growth rates of the lengths $L_j=L_j(t)$ of the slits are
\begin{equation*}
    v_{1}(t)=\frac{d  L_{1}(t)}{dt}=\frac{C_{1}(t)\left|c(t)\right|\lambda_{1}(t)\left(\lambda_{1}(t)-1\right)^{2}\left(\lambda_{1}(t)-\lambda_{2}(t)\right)}{\sqrt{\left(\lambda_{1}(t)-a_{11}(t)\right)
    \left(a_{12}(t)-\lambda_{1}(t)\right)\left(a_{21}(t)-\lambda_{1}(t)\right)\left(a_{22}(t)-\lambda_{1}(t)\right)}}\,,
\end{equation*}
\begin{equation*}
    v_{2}(t)=\frac{d L_{2}(t)}{dt}=\frac{C_{2}(t)\left|c(t)\right|\lambda_{2}(t)\left(\lambda_{2}(t)-1\right)^{2}\left(\lambda_{1}(t)-\lambda_{2}(t)\right)}{\sqrt{\left(\lambda_{2}(t)-a_{11}(t)\right)
    \left(\lambda_{2}(t)-a_{12}(t)\right)\left(\lambda_{2}(t)-a_{21}(t)\right)\left(a_{22}(t)-\lambda_{2}(t)\right)}}\,.
\end{equation*}
Taking into account the ratio of growth rates of the slit lengths \eqref{alph}, we get
\[
    \frac{1}{C_{2}(t)}=\alpha\sqrt{\frac{\left(\lambda_{1}(t)-a_{11}(t)\right)\left(a_{12}(t)-\lambda_{1}(t)\right)\left(a_{21}(t)
    -\lambda_{1}(t)\right)\left(a_{22}(t)-\lambda_{1}(t)\right)}{\left(\lambda_{2}(t)-a_{11}(t)\right)\left(\lambda_{2}(t)-a_{12}(t)\right)\left(\lambda_{2}(t)-a_{21}(t)\right)\left(a_{22}(t)-\lambda_{2}(t)\right)}}
\]
\begin{equation*}
    \times\,\frac{\lambda_{2}(t)}{\lambda_{1}(t)}\left(\frac{\lambda_{2}(t)-1}{\lambda_{1}(t)-1}\right)^{2}+1,\qquad C_{1}(t)=1-C_{2}(t),
\end{equation*}
what is equivalent to \eqref{C12}.

Now we will give the results of some numerical calculations. Let $x_1=-2$, $x_{2}=-1$ and we need to find the conformal mapping of the upper half-plane onto the half-plane with two slits orthogonal to the boundary of lengths $L_1 = 1$ and $L_2 = 2$ (Fig. \ref{Dva}). We put $\alpha=L_1/L_2=0.5$. Solving the Cauchy problem for the system of ODEs we obtain the numerical values of the accessory parameters in \eqref{conftwo}. They are given in Table~\ref{2.P-P2}.  The calculations were carried out using the Wolfram Mathematica package. For comparison, we also give in the table the values calculated with the help the well-known Driscoll's SC Toolbox package for MATLAB \cite{Driscoll}; the values are given with $7$ digits after the decimal point. As you can see, the obtained values of the  parameters coincides up to 6 digits after the decimal point.

\begin{table}[h]\caption{The values of the accessory parameters for the conformal mapping of the upper half plane onto the half-plane with two slits.} \label{2.P-P2}
\begin{center}
\begin{tabular}{|c|c|c|}
    \hline
    \raisebox{-0.1cm}{Parameter} & \raisebox{-0.1cm}{Our method} & \raisebox{-0.1cm}{SC Toolbox} \\[1.5ex]
    \hline
      \raisebox{-0.1cm}{$c$} &
      \raisebox{-0.1cm}{$0.5867804$} &
      \raisebox{-0.1cm}{$-$}\\ [1.5ex]
      \hline
      \raisebox{-0.1cm}{$a_{11}$} &
      \raisebox{-0.1cm}{$-9.8974995$} &
      \raisebox{-0.1cm}{$-9.8974994$}\\ [1.5ex]
      \hline
      \raisebox{-0.1cm}{$\lambda_{1}$} &
      \raisebox{-0.1cm}{$-8.5126732$} &
      \raisebox{-0.1cm}{$-8.5126732$}\\ [1.5ex]
      \hline
      \raisebox{-0.1cm}{$a_{12}$} &
      \raisebox{-0.1cm}{$-7.3979258$} &
      \raisebox{-0.1cm}{$-7.3979257$}\\ [1.5ex]
      \hline
      \raisebox{-0.1cm}{$a_{21}$} &
      \raisebox{-0.1cm}{$-6.8108252$} &
      \raisebox{-0.1cm}{$-6.8108251$}\\ [1.5ex]
      \hline
      \raisebox{-0.1cm}{$\lambda_{2}$} &
      \raisebox{-0.1cm}{$-3.7393888$} &
      \raisebox{-0.1cm}{$-3.7393887$}\\ [1.5ex]
      \hline
      \raisebox{-0.1cm}{$a_{22}$} &
      \raisebox{-0.1cm}{$-0.3978735$} &
      \raisebox{-0.1cm}{$-0.3978735$}\\ [1.5ex]
      \hline
    \end{tabular}
\end{center}
\end{table}

To eliminate the degeneracy of the system at the initial moment, a slight change in the initial data was made: instead of $a_{ij}\left(0\right)=\lambda_i\left(0\right)$ we took $a_{ij}\left(0\right)=\lambda_i\left(0\right)+(-1)^{j}\epsilon_{ij}$, $i$, $j=1$, $2$, $\epsilon_{ij}=10^{-15}$.\medskip
\end{ex}

\begin{ex}
Consider the problem of finding the accessory parameters for the conformal mapping of the upper half-plane onto the hexagon $H$ that is  the rectangle $R=[-1,1]\times [0,1]$ with the removed corner $[-1,-0.5]\times [0.5,1]$ (Fig.~\ref{Kirpich}).

As an initial polygon, we take the rectangle $R$. The initial mapping of $\mathbb{H}^+$ onto $R$ is
\begin{equation}\label{fex2}
    f\left(z,0\right)=-c\left(0\right)\int_{0}^{z}\left(\zeta\left(\zeta-1\right)\right)^{-\frac{1}{2}}\left(\zeta-a_{2}\left(0\right)\right)^{-\frac{1}{2}}\left(\zeta-a_{1}\left(0\right)\right)^{-\frac{1}{2}}d\zeta+1,
\end{equation}
where
$$
c\left(0\right)=\frac{2+\sqrt{2}}{K\left({1}/{\sqrt2}\right)}\,=1.84146496\ldots,\quad a_{1}\left(0\right)=-(3+2\sqrt{2})=-5.82842712\ldots,$$
$$ a_{2}\left(0\right)=-{2}(\sqrt{2}+1)=-4.82842712\ldots
$$
Here $K\left(k\right)$ is the complete elliptic integral of the first kind.  The branch of the integrand is fixed so that it is positive for large positive real $\zeta$; the same applies to the mappings $f(z,t)$ given below.

Let two slits go, orthogonally to the boundary of the rectangle $R$, from the points $-0.5+i$ and $-1+0.5i$ with the same growth rates.
We note that the preimages of the points $-0.5+i$ and $-1+0.5i$ under the mapping \eqref{fex2} are
$$
\lambda_1= -(\sqrt{2+\sqrt{2}}+1)(1+\sqrt{2})=-6.87509856\ldots, \quad \lambda_2=-(1+\sqrt[4]{2})(1+\sqrt{2})=-5.28521351\ldots.
$$

The Schwarz-Christoffel integral, mapping $\mathbb{H}^{+}$ onto $R$ with two slits, described above, and normalized by the conditions $f\left(0,t\right)=1, f\left(1,t\right)=1+i, f\left(\infty,t\right)=i$, has the form
\begin{equation}
    f\left(z,t\right)=-c(t)\int_{0}^{z}(\zeta(\zeta-1))^{-\frac{1}{2}}\prod_{l=1}^{2}\left(\zeta-\lambda_{l}(t)\right)
    \prod_{i,j=1}^{2}\left(\zeta-a_{ij}(t)\right)^{-\frac{1}{2}}\prod_{k=1}^{2}\left(\zeta-a_{k}(t)\right)^{-\frac{1}{2}}d\zeta+1,
\end{equation}
where $-\infty<a_{11}(t)<\lambda_1(t)<a_{12}(t)<a_1(t)<a_{21}(t)<\lambda_2(t)<a_{22}(t)<a_2(t)<0$. We note that the image of $\infty$ is not an angular point but it does not matter, according to Remark~\ref{r2}. For $t=0$, we have $a_{11}(0)=\lambda_1(0)=a_{12}(0)=\lambda_1$, $a_{21}(0)=\lambda_2(0)=a_{22}(0)=\lambda_2$.

At the chosen growth rates, the slits simultaneously reach the point $0.5+0.5i$ for some $t=T$, and the family of mappings $f(z,t)$ converges to the mapping $g\left(z\right)$ of $\mathbb{H}^+$ onto the needed hexagon $H$; it has the form
$$
f(z,T)=-c\int_{0}^{z}(\zeta(\zeta-1)(\zeta-a))^{-\frac{1}{2}}(\zeta-\lambda)^{1/2}\prod_{i=1}^{2}(\zeta-b_{i})^{-\frac{1}{2}}d\zeta+1,
$$
$-\infty<b_1<\lambda<b_2<a<0$. Moreover, $a_{11}(t)\to b_1$; $\lambda_1(t)$, $a_{12}(t)$, $a_1(t)$, $\lambda_2(t)$ and $a_{21}(t)\to \lambda$,  $a_{22}(t)\to b_2$,  $a_2(t)\to a$, as $t\to T$.
Table~\ref{2.Kirpich} compares the results of our method and those obtained by the SC Toolbox package; the values are given with $7$ digits after the decimal point. As we can see, most of the obtained values of the parameters agree up $6$ digits after the decimal point; the values of $\lambda$ agree up $4$ digits; this is caused by the fact that the convergence of the mappings is non-uniform near this point. 

\begin{table}[h]\caption{The values of the accessory parameters for the conformal mapping of the upper half plane $\mathbb{H}^+$ onto the hexagon $H$.} \label{2.Kirpich}
\begin{center}
\begin{tabular}{|c|c|c|}
    \hline
    \raisebox{-0.1cm}{Parameter} &
    \raisebox{-0.1cm}{Our method} &
    \raisebox{-0.1cm}{SC Toolbox} \\[1.5ex]
    \hline
      \raisebox{-0.1cm}{$c$} &
      \raisebox{-0.1cm}{$1.90896$} &
      \raisebox{-0.1cm}{$-$}\\ [1.5ex]
      \hline
      \raisebox{-0.1cm}{$b_1$} &
      \raisebox{-0.1cm}{$-8.6039921$} &
      \raisebox{-0.1cm}{$-8.6039920$}\\ [1.5ex]
      \hline
      \raisebox{-0.1cm}{$\lambda$} &
      \raisebox{-0.1cm}{$-4.6992541$} &
      \raisebox{-0.1cm}{$-4.6992949$}\\ [1.5ex]
      \hline
      \raisebox{-0.1cm}{$b_2$} &
      \raisebox{-0.1cm}{$-4.0805629$} &
      \raisebox{-0.1cm}{$-4.0805627$}\\ [1.5ex]
      \hline
      \raisebox{-0.1cm}{$a$} &
      \raisebox{-0.1cm}{$-4.0225626$} &
      \raisebox{-0.1cm}{$-4.0225623$}\\ [1.5ex]
      \hline
    \end{tabular}
\end{center}
\end{table}

\begin{figure}[h]
	\centering
	\includegraphics[scale=0.3]{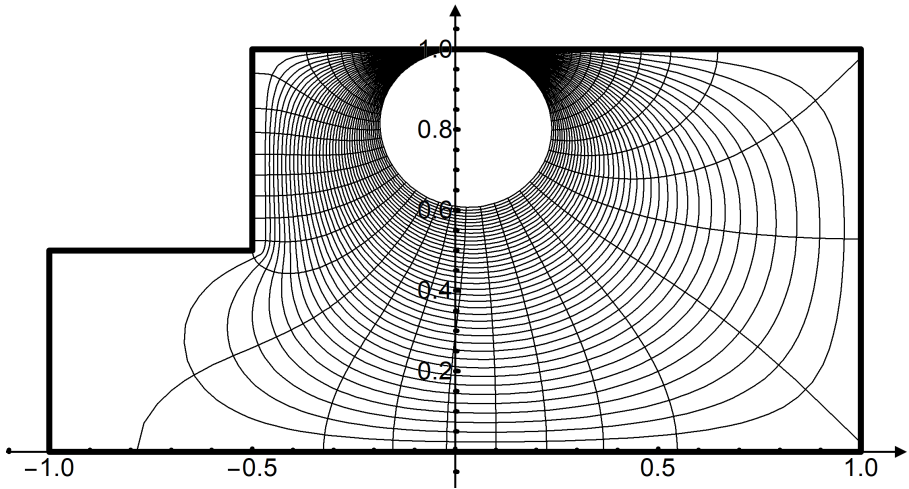}
	\caption{The image of the Cartesian grid under the conformal mapping of the half-plane $\mathbb{H}^+$ onto the hexagon $H$.}
	\label{Kirpich}
\end{figure}
\end{ex}

\newpage \subsection{General case}\label{Gc}
\hfill\\ In general, by setting the relationships between the velocities $v_{j}$ of the lengths of the slits, which could be even non-stationary, we have $m-1$ equations for $m$ functions $C_{i}(t)$:
\begin{equation*}
    g_{\nu}\left(v_{1}(t),v_{2}(t)\ldots, v_{m}(t),t\right)=0,\ 1\leq\nu\leq m-1.
\end{equation*}
The condition
\begin{equation}
    \sum_{i=1}^{m}C_{j}(t)=1
    \label{S2}
\end{equation}
closes the system of equations to determine $C_{i}(t)$, $1\le i \le m$. Together with the equations (\ref{2.17})-(\ref{2.20}) they form a complete system to find the accessory parameters.

Now consider the very important case where the ratios of the rates $v_{j}$ are constant. Let, for example,
$$
\frac{v_j(t)}{v_m(t)}\equiv \alpha_j,\quad 1\le j\le m-1,
$$
where $\alpha_j$ are some positive constants. Taking into account  \eqref{rate} and  \eqref{S2} we obtain
$$
C_j^{-1}(t)=\alpha_j^{-1}A_j(t)\sum_{k=1}^m\alpha_kA_k^{-1}(t),\quad 1\le j\le m,
$$
where we put $\alpha_m=1$; the values of $A_j(t)$ is given by \eqref{Art}.

Therefore we find $C_j(t)$ as functions depending explicitly on the accessory parameters. Putting them in (\ref{2.17})-(\ref{2.20}) we obtain a system of ODEs to determine the accessory parameters.

\section{Relationship between accessory parameters and side lengths}

The search of a suitable parameter with respect to which the family of mappings will be continuously differentiable is an important and interesting problem. In this section we consider families corresponding to deformations of a polygon with fixed angles. As the above parameter, it is proposed to take any varying side length (in our case, the length of one of the slits). This choice is based on the idea of a bijective and differentiable mapping between the prevertices and the lengths of polygon sides (with fixed angles). The proof of this fact allows us to justify the existence of the derivative $\frac{\partial f}{\partial t}$ of the mapping $f(z,t)$, as well as the existence of the functions  $C_i(t).$

\subsection{Differentiability of polygon side lengths as a function of accessory parameters}
\hfill\\ Consider the set of polygons $P$ satisfying the conditions:

(1) the boundary of $P$ is a closed polygonal line $A_1A_2\ldots A_nA_{n+1}$, where $A_{n+1}=A_1$, and the value of $A_{n-2}$ is fixed;

(2) the angle of inclination $\beta$ of the segment $A_1A_2$  to the real axis is fixed;

(3) the values of interior angles of  $P$, $\alpha_1\pi, \alpha_2\pi,\ldots \alpha_n\pi $ at the vertices $A_1A_2\ldots A_n$ are fixed.

Then the conformal mapping of the upper half-plane onto $P$ is given by the formula
\begin{equation}
    f\left(z\right)=d e^{i\beta}\int_{0}^{z}\prod_{k=1}^{n-1}\left(\zeta-a_{k}\right)^{\alpha_{k}-1}d\zeta+A_{n-2};
    \label{de}
\end{equation}
Here, as above, we assume that $a_{n-2}=0$, $a_{n-1}=1$ and the preimage of $A_n$ is $a_n=\infty$.

The length of the $\nu$-th side of the polygon is:
\begin{equation}\label{lnu}
    l_{\nu}=d \int_{a_{\nu}}^{a_{\nu+1}}\prod_{k=1}^{n}\left|x-a_{k}\right|^{\sigma_{k}}dx.
\end{equation}
We note that if we fix the values of $l_{\nu}$ with $\ 1\leq\nu\leq n-2$, then the values $l_{n-1}$ are  $l_{n}$ are uniquely determined, because of (2). Consider the mapping $\Phi:(d,a_1,a_2,\ldots a_{n-3})\mapsto (l_1,l_2,\ldots,l_{n-2})$ where $d>0$, $a_1<a_2<a_3<\ldots <a_{n-3}<0$ where $l_{\nu}$ is given by \eqref{lnu}.  We note that if some of $\alpha_k-1$ is positive, the mapping  \eqref{de} may be non-univalent and the corresponding image is a multi-sheeted Riemann surface.

\begin{lem}\label{smo}
The mapping $\Phi$ is smooth and non-degenerate at every point of the set $$\mathcal{A}=\{(d, a_1,a_2,a_3,\ldots,a_{n-3})\in\mathbb{R}^{n-2}\mid d>0, a_1<a_2<a_3<\ldots<a_{n-3}<0\}.$$
\end{lem}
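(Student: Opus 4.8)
The plan is to treat $\Phi$ as a smooth map between open subsets of $\mathbb{R}^{n-2}$ — the source $\mathcal A$ has coordinates $(d,a_1,\dots,a_{n-3})$, and there are exactly $n-2$ output lengths $l_1,\dots,l_{n-2}$ — and to establish the two assertions separately: first that each $l_\nu$ is a $C^\infty$ (indeed real-analytic) function of the parameters, and then that the differential $d\Phi$ has trivial kernel at every point of $\mathcal A$. Since source and target have equal dimension, trivial kernel is equivalent to non-degeneracy of the Jacobian. This is in essence Weinstein's theorem, and the geometric content of the second part is a rigidity statement: a polygon with prescribed angles, fixed $A_{n-2}$, fixed inclination $\beta$, and prescribed side lengths $l_1,\dots,l_{n-2}$ is rigid.

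For smoothness I would remove the endpoint singularities by an affine substitution freezing the integration interval. On the $\nu$-th interval (all of which are finite, since $l_1,\dots,l_{n-2}$ correspond to $(a_1,a_2),\dots,(a_{n-2},a_{n-1})=(0,1)$) put $x=a_\nu+(a_{\nu+1}-a_\nu)u$, $u\in[0,1]$; writing $\omega(x)=\prod_k|x-a_k|^{\sigma_k}$ one obtains
\[
l_\nu=d\,(a_{\nu+1}-a_\nu)^{1+\sigma_\nu+\sigma_{\nu+1}}\int_0^1 u^{\sigma_\nu}(1-u)^{\sigma_{\nu+1}}\,G_\nu(u;a)\,du,
\]
where $G_\nu(u;a)=\prod_{k\neq\nu,\nu+1}|x-a_k|^{\sigma_k}$ is smooth and nonvanishing in all variables on $[0,1]$, because for $k\neq\nu,\nu+1$ the point $a_k$ stays off the interval. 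The weights $u^{\sigma_\nu}(1-u)^{\sigma_{\nu+1}}$ are fixed and integrable (as $\sigma_\nu,\sigma_{\nu+1}>-1$ for genuine finite vertices), so differentiation under the integral sign over the fixed domain $[0,1]$ is legitimate to all orders; hence $\Phi$ is $C^\infty$ (in fact real-analytic) on $\mathcal A$.

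For non-degeneracy, suppose $(\delta d,\delta a_1,\dots,\delta a_{n-3})\in\ker d\Phi$, and let $\dot f$ denote the induced first variation of the map \eqref{de}. Differentiating $\log f'=\log d+i\beta+\sum_k\sigma_k\log(z-a_k)$ gives $\dot f{}'/f'=R(z)$ with
\[
R(z)=\frac{\delta d}{d}-\sum_{m=1}^{n-3}\frac{\sigma_m\,\delta a_m}{z-a_m},
\]
which is \emph{real} on $\mathbb{R}$; this encodes the fact that the deformation keeps every interior angle (hence every side direction $\theta_\nu$) fixed, so it stays inside our family. The assumption $\delta l_\nu=0$, $1\le\nu\le n-2$, together with the fixed data $A_{n-2}$, $\beta$ and the angles, pins the whole polygon (as already noted, $l_1,\dots,l_{n-2}$ determine $l_{n-1},l_n$ by closure); concretely $\delta(A_{\nu+1}-A_\nu)=e^{i\theta_\nu}\delta l_\nu=0$ and $\delta A_{n-2}=0$, so every vertex is stationary, $\delta A_k=0$ for all $k$.

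It then remains to deduce $R\equiv0$ from $\delta A_k=0$. Conceptually this is immediate: the deformed polygon agrees with $P$ to first order (same vertices, same angles), so by uniqueness of the normalized conformal map onto $P$ the variation $\dot f$ vanishes identically; then $f'R\equiv0$ forces $R\equiv0$, whence $\delta d=0$ and $\sigma_m\delta a_m=0$, and since the $a_m$ are genuine vertices ($\sigma_m\neq0$) we get $\delta a_m=0$. Thus $\ker d\Phi=\{0\}$ and $\Phi$ is non-degenerate. \textbf{The main obstacle} is making this last step rigorous: because $f'$ is $0$ or $\infty$ at the prevertices, the naive identity $\delta A_k=\dot f(a_k)+f'(a_k)\,\delta a_k$ carries a singular boundary term. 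I would control it through the local normal form $f(z)=A_k+B_k(z-a_k)^{\alpha_k}(1+o(1))$, $B_k\neq0$, near each prevertex, which renders $A_k$ and its variation finite, combined with the smooth dependence of the normalized Riemann map on a polygon that varies only at second order. Equivalently, one checks directly that the boundary values of the holomorphic function $\dot f$ lie on the fixed side-lines of $P$ with zero vertex displacement, so that a Schwarz-reflection argument continues $\dot f$ across $\mathbb{R}$ and a Liouville-type estimate forces $\dot f\equiv0$.
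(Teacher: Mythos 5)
Your smoothness half and your rigidity step are both sound and match the paper's (sketchy) argument: the paper disposes of smoothness with one line about differentiating \eqref{lnu} under the integral sign, so your affine-rescaling argument on the fixed interval $[0,1]$ is actually more careful; and your observation that $\delta l_\nu=0$ for $1\le\nu\le n-2$, together with the fixed $A_{n-2}$, $\beta$ and angles, forces every vertex and hence every side-line to be stationary is exactly the geometric input the paper's proof needs in order to write homogeneous boundary conditions.

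The gap is the closing step, and it is the one you flagged yourself. Your primary route cannot be repaired in the form stated: uniqueness of the normalized conformal map compares genuine maps onto a genuine common domain, whereas here the perturbed polygons $P_\epsilon$ agree with $P$ only to first order ($A_k(\epsilon)=A_k+o(\epsilon)$, not $A_k(\epsilon)=A_k$), so $f_\epsilon\neq f$ in general and uniqueness says nothing about $\dot f$. The patch you propose, ``smooth dependence of the normalized Riemann map on the polygon,'' is circular: that dependence is precisely what Lemma~\ref{smo} is invoked to establish in Subsection~6.2 (differentiability of $f(z,t)$ in $t$), i.e.\ it is the theorem of Weinstein being proved, not a tool available for its proof. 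Your ``equivalently'' fallback is in fact the paper's argument: $\delta f$ is holomorphic in $\mathbb{H}^{+}$ and satisfies the homogeneous Hilbert condition $\beta_\nu\,\Re\delta f(x)+\gamma_\nu\,\Im\delta f(x)=0$ on each interval $(a_\nu,a_{\nu+1})$, and one must show this problem admits only the trivial solution. But the decisive difficulty there is not settled by ``Schwarz reflection plus a Liouville-type estimate'': $\delta f$ is in general \emph{unbounded} --- near a prevertex with $\alpha_\nu<1$ it blows up like $(z-a_\nu)^{\alpha_\nu-1}$, with coefficient proportional to $\delta a_\nu$ --- so a boundedness argument is unavailable exactly where it is needed. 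What is required is the classification of the admissible power singularities of $\delta f$ at the nodes $a_1,\dots,a_{n-1},\infty$ and the resulting index count for Hilbert problems with piecewise-constant coefficients; this is what the paper compresses into ``studying the behavior of $\delta f$ at the points $a_\nu$,'' leaning on the theory developed in \cite{Gahov} and \cite{Monahov}. Without that node analysis neither of your two completions yields non-degeneracy; with it, your fallback coincides with the paper's proof.
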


\begin{rem}\label{r3} We note that the relationship between the accessory parameters and the side lengths was considered in the work of A.~Vainshtein \cite{Weinstein1}; the result obtained was used to substantiate the continuity method he developed. Variations of the this idea were also used by other scientists: K.~Reppe \cite{Reppe}, L.~N.~Trefesen \cite{Trefethen} and others. In particular, they form the basis for the numerical mapping of the Schwarz-Christoffel integral~\cite{Driscoll_Tr}. Below we will give a sketch of the proof of this fact based on the ideas given in \cite[ch.~3]{Monahov}.\medskip
\end{rem}

Differentiating $l_{\nu}$ in \eqref{lnu} with respect to $a_{\mu}$ and $d$, we can easily show that exist the mapping $\Phi$ is continuously differentiable. It remains to prove that the Jacobian of the mapping $\Phi$ is non-degenerate.
Assume the contrary. Then at some point of $\mathcal{A}$ there exist variations $\delta d$, $\delta a_1,\ldots,\delta a_{n-2}$ under with the values of all $l_\nu$ do not change. If the straight line containing the segment $A_\nu A_{\nu+1}$ is described as $\{(\xi,\eta)\in \mathbb{R}^2: \beta_\nu \xi+\gamma_\nu \eta=\epsilon_\nu\}$, where $\beta_\nu$, $\gamma_\nu$, $\epsilon_\nu$ are some constants, then the variation  $\delta f$ satisfies the boundary condition of a homogeneous Hilbert boundary value problem with piecewise-constant coefficients (see, e.g. \cite{Gahov} or \cite{Monahov}):
$$\beta_\nu \Re \delta f(x)+\gamma_\nu \Im \delta f(x)=0,\quad  a_\nu<x<a_{\nu+1},$$ for all $\nu$. Studying the behavior of $\delta f$ at the points $a_\nu$ we conclude that the Hilbert problem has only a trivial solution. Lemma~\ref{smo} is proved.\medskip

From Lemma~\ref{smo} it follows that the accessory parameters in \eqref{2.3} depends smoothly on the lengths of the  slits.


\subsection{Differentiability of the family of mappings}
\hfill\\ Now consider a family of polygons with are obtained from a fixed  polygon by drawing  $m$ slits of lengths $l_1,l_2\ldots,l_m$. We will assume that  these lengths  depends smoothly on each other. For definiteness, we will assume that $l_j=\phi_j(l_1)$, $2\le j\le m$, where $\phi_j$ are smooth functions. Then the accessory parameters and the corresponding conformal mappings also depends smoothly on $l_1$. Therefore, we can put, as a parameter $t$ for the family $f(z,t)$, the value of $l_1$ and, consequently, consider $f(z,t)$ as a smooth family.

Then we can take the expression following from Loewner's equation $(\ref{2.16})$ as the definition of the functions $C_k(t)$, since all the quantities in it now are well defined:
\begin{equation*}
    C_{k}(t)=\frac{2\pi i}{\lambda_{k}(t)\left(\lambda_{k}(t)-1\right)^{2}}\underset{z=\lambda_{k}(t)}{\mathrm{res}}\left(\frac{\partial f}{\partial t}\left(\frac{\partial f}{\partial z}\right)^{-1}\right),\ 1\leq k\leq m.
\end{equation*}

Now we will show that, under such the smooth parametrization,  the function $q(t)$, defined by \eqref{q1wz}, is a smooth function of the parameter $t=L_1$.
Since $f(z,T)=f(w(z,t),t)$ and $f(z,T)$ is a Christoffel-Schwarz integral, differentiating by $z$ we have $f'(z,T)=f'(w(z,t),t) w'(z,t)$, therefore,
$$
w'(z,t)=\frac{f'(z,T)}{f'(w(z,t),t)}\,.
$$
From \eqref{2.3} we obtain
$$
f'(w(z,t),t)\sim \Psi(w(z,t)-1)^{\sigma_{n-1}},\quad  {f'(z,T)}\sim  C(z-1)^{\sigma_{n-1}}, \ z\to 1,
$$
where $C$ is a constant and $\Psi$ is a non-zero smooth function of $c(t)$ and the preimages of  the vertices of the polygonal domain $D_n(t)$. Therefore, we can consider that $\Psi=\Psi(t)$ is a smooth function of $t$. Then
$$
w'(1,t)=\lim_{z\to 1}\frac{f'(z,T)}{f'(w(z,t),t)}\,= \frac{C}{\Psi(t)(w'(1,t))^{\sigma_{n-1}}},
$$
consequently,
$$
    q(t)=w'(1,t)=\left(\frac{C}{\Psi(t)}\right)^{1/\alpha_{n-1}}.
$$
This shows that $q(t)$ is smooth.

We also note that the parametrization by the length of one of the slits allows us to avoid some technical difficulties. Let we want to obtain the first slit with length $L_1$ (the lengths of the remaining slits are uniquely determined). Then the system of ODEs (\ref{2.17})--(\ref{2.20}) must be solved on the interval  $\left[0,L_{1}\right]$. In the case of an arbitrary parametrization, it is necessary to look for the final value $T$ of the parameter $t$, at which the slits have desired lengths. Here, this value is determined automatically.

Weinstein's result helped us to prove the differentiability of a family of mappings in the case of several slits. However, the consequences are not limited to this: the justification is also applicable to a more general case considered by I.~A.~Kolesnikov in \cite{Kolesnikov2}.

\section{Conclusion}

We propose a generalization of the modification of the well-known Kufarev's method for approximate finding the conformal mapping of the upper half-plane onto a polygon, given by V.~Ya.~Gutlyanskii and A.~O.~Zaidan. It is based on the Loewner parametric method and considering one-parameter families $f(z,t)$, $0\le t\le T$, of Schwarz-Christoffel integrals; for a fixed $t$, the functions $f(z,t)$ maps the upper half-plane conformally onto a polygon with a few rectilinear slits and the lengths of the slits increases with the growth of $t$.
The integral representation of $f(z,t)$ depends on some unknown (accessory) parameters.
The differential equations that describe the dynamics of the mappings $f(z,t)$ and the corresponding accessory parameters are derived. The existence and uniqueness of a one-parameter family implies the existence and uniqueness of a solution to the resulting system of ODEs. We also investigate the problem of finding a suitable parameter $t$. We show that if the parameter $t$ coincides with the length of some slit, and other lengths are depends on this length smoothly, then the family $f(z,t)$ depends on it smoothly, and the accessory parameters can be found by solving a Cauchy problem for a system of ODEs. Numerical calculations confirms the efficiency of the method and its good accuracy.

However, some questions remained unexplored. For example, the stability of the ODE system with respect to the variation of initial data needs to be justified, as well as the use of the power series method in the general case. Also we would like to achieve a greater accuracy in the numerical solution of the obtained system of ODEs to determine the accessory parameters in the Schwarz-Christoffel integral.

\section{Acknowledgements}

The work of A.~Posadsky and S.~Nasyrov was supported by the Ministry of Science and Higher Education of the Russian Federation (agreement no. 075-15-2022-287).


\end{document}